
\newcommand{\Hmm}[1]{\leavevmode{\marginpar{\tiny%
$\hbox to 0mm{\hspace*{-0.5mm}$\leftarrow$\hss}%
\vcenter{\vrule depth 0.1mm height 0.1mm width \the\marginparwidth}%
\hbox to
0mm{\hss$\rightarrow$\hspace*{-0.5mm}}$\\\relax\raggedright #1}}}

\documentclass[12pt]{article}
\usepackage[T1]{fontenc}
\usepackage[utf8]{inputenc}
\usepackage[english]{babel}
\usepackage{graphicx}
\usepackage[font=small]{quoting}
\usepackage{caption}
\usepackage{mathrsfs}
\usepackage[top=.8in,bottom=1in,left=1in,right=1in]{geometry}
\usepackage[english]{babel}
\usepackage{stmaryrd}
\usepackage{verbatim}
\usepackage{color}
\usepackage{picture}
\usepackage{amsmath,amsthm,amsfonts,amssymb}
\usepackage{comment}
\usepackage{mathtools}
\usepackage{pgf,tikz}
\usepackage{titling}

\setcounter{MaxMatrixCols}{10}
\usetikzlibrary{arrows}

\newtheorem{thm}{Theorem}[section]
\newtheorem{lem}[thm]{Lemma}

\theoremstyle{definition}

\theoremstyle{remark}

\DeclareMathOperator{\Qp}{\mathcal Q_{p}}

\begin{document}

\title{Eigenvalues of the  Finsler $p$-Laplacian \\on varying domains. }
\author{{\ } Giuseppina di Blasio\thanks{Universit\`{a} degli Studi della Campania \textquotedblleft Luigi
Vanvitelli\textquotedblright, Dipartimento di Matematica e Fisica, Viale Lincoln, 5 81100 Caserta, Italy. Email:
giuseppina.diblasio@unicampania.it} {, } Pier Domenico Lamberti \thanks{Universit\`{a} degli studi di Padova, Dipartimento di Matematica \textquotedblleft Tullio Levi-Civita \textquotedblright, Via Trieste, 63 35121 Padova, Italy. Email:
lamberti@math.unipd.it }}
\maketitle

\begin{abstract} We study the dependence of the first eigenvalue of the Finsler $p$-Laplacian and the corresponding eigenfunctions upon perturbation of the domain and we generalize a few results known for the standard $p$-Laplacian. In particular, we prove a Frech\'{e}t differentiability result for the eigenvalues, we compute the corresponding Hadamard formulas and we prove a continuity result for the eigenfunctions. Finally, we briefly discuss a well-known overdetermined problem and we show how to deduce
the Rellich-Pohozaev identity for  the Finsler $p$-Laplacian  from the Hadamard formula.
\end{abstract}

\vspace{11pt}

\noindent
{\bf Keywords:} Finsler, anisotropic $p$-Laplacian, stability of eigenvalues,
domain perturbation.

\vspace{6pt}
\noindent
{\bf 2000 Mathematics Subject Classification:} 35P15, 35J25, 47A75, 47B25.

\bigskip
\section{Introduction}

Let $F$ be a positive, one-homogeneous, convex function in $\mathbb{R}^{N}$ and $p\in ]1,+\infty [$. In this paper we deal with a class of operators of the form
\begin{equation}\label{Qp}
\Qp u:=\text{div}(F^{p-1}(Du)F_{\xi}(Du))   ,
\end{equation}
acting on real-valued functions $u$ defined on an open connected subset $\Omega$ of ${\mathbb{R}}^N$,  (by $F_{\xi} $ we denote the gradient of $F$). This class includes the standard $p$-Laplacian
$$ \sum_{i=1}^N \frac{\partial}{\partial x_i} \left( |Du|^{p-2}\frac{\partial u}{\partial x_i}  \right) $$  which is  obtained when $F$ is the Euclidean modulus, as well as the pseudo-$p$-Laplacian
$$ \sum_{i=1}^N \frac{\partial}{\partial x_i} \left( \left| \frac{\partial u}{\partial x_i}    \right|^{p-2}\frac{\partial u}{\partial x_i}  \right) $$
which is obtained when $F(\xi)=(\sum_{i}^N|\xi_i|^p)^{1/p}$ for all $\xi\in \mathbb{R}^N$.     The operator in \eqref{Qp}  is often called anisotropic
$p$-Laplacian or Finsler $p$-Laplacian and  has been studied in several papers, see e.g., \cite{aflt, bfk, BKJ, ciraolo, CS, DdBG, kn08, wxpac}.
 Clearly, operator \eqref{Qp} is of interest mainly for $N\geq 2$ since for $N=1$ one obtains only  the standard $p$-Laplacian. Thus, also for the purposes of the paper itself, we shall tacitly assume throughout the paper that $N\geq 2$, although, in principle, this is not really required.
Here we consider the eigenvalue problem associated with $\Qp$
\begin{equation}
\left\{
\begin{array}{ll}
-\Qp u =\lambda |u|^{p-2}u, & \text{in }\Omega, \\
u=0, & \text{on }\partial \Omega ,
\end{array}
\right.  \label{intro_eigpb}
\end{equation}
 where $\Omega $ is assumed to be of class $C^{1,\alpha }$, $\alpha \in (0,1]$. It is well known that
 the eigenvalue problem \eqref{intro_eigpb} admits a unique eigenvalue $\lambda _{p}(\Omega )$ associated with a positive eigenfunction, which is simple and isolated, see e.g., \cite{bfk,dgp2}.

Our aim is to analyze the dependence of  $\lambda _{p}(\Omega )$ on $\Omega$. To do so, we adopt  the approach developed in  \cite{Lamb} for the standard $p$-Laplacian and we extend the corresponding results to the general case.  Namely, we consider a suitable class of domains which are represented as diffeomorphic images of $\Omega$  via a diffeomorphism $\phi:\Omega \to \phi (\Omega)$ of class $C^{1,\alpha}$, we study  the map which takes $\phi$ to the eigenvalue $\lambda_p(\phi(\Omega ))$,   we prove a Frech\'et differentiability result for dependence of  $\lambda_p(\phi(\Omega ))$ on $\phi$, and we compute the corresponding Hadamard  formula for  the Frech\'et differential, see Theorems~\ref{Frechet_diff} and \ref{Th_Hadamard}.  We also prove that the uniquely determined positive normalized eigenfunction $u_{p,\phi}$ associated with $\lambda_p(\phi (\Omega))$ depends with continuity on $\phi$ in the sense that its pull-back $u_{p,\phi}\circ \phi$, which is a function defined in the fixed domain $\Omega$, varies continuously in $C^1(\overline{\Omega})$ upon variation of $\phi$, see Theorem~\ref{Th_contuinity}.

 Finally, we briefly discuss how our results provide a  motivation for the study of the well-known overdetermined problem  \eqref{overdet} associated with the Faber-Krahn inequality for the Finsler $p$-Laplacian and we show how to deduce the Rellich-Pohozahev identity \eqref{rellichformula} from the Hadamard formula.

We note that domain perturbation problems have been extensively studied in the
Euclidean case for the Dirichlet Laplacian as well as for more general elliptic operators and there is a wide related literature, see e.g., \cite{arrieta, BL, buosopoly, Lamb, N, S} and the references therein. In particular, we refer to the  monograph  \cite{henry}  where the method of domain transplantation via diffeomorphisms is extensively applied and to \cite{buoso} for a collection of Hadamard-type  formulas.  For  the case of quasi-linear PDEs, much less is available in the literature and we refer to \cite{burlam, GS, Lamb} for the analysis of the standard $p$-Laplacian
and to \cite{BLhardy} for a  $p$-Laplacian problem involving  a Hardy potential.

\section{Preliminaries and notation}

Throughout the paper we will consider a convex even 1-homogeneous function
\begin{equation*}
\mathbb{R}^{N}\owns \xi\mapsto F(\xi)\in [0,+\infty[,
\end{equation*}
that is, a convex function such that
\begin{equation}  \label{eq:omo}
F(t\xi)=|t|F(\xi),
\end{equation}
for all $t\in \mathbb{R},\, \xi \in \mathbb{R}^{N}$ and we assume that
\begin{equation}  \label{eq:lin}
a|\xi| \le F(\xi),
\end{equation}
for all $ \xi \in \mathbb{R}^{N}$, for some constant $a>0$. The hypotheses on $F$ imply there exists $b\ge a$
such that
\begin{equation*}
F(\xi)\le b |\xi| ,
\end{equation*}
for all $\xi \in \mathbb{R}^{N}$. Moreover, throughout the paper we will assume that $p\in ]1,\infty[$ and that  $F\in C^{2}(\mathbb{R}^{N}\setminus \{0\})$, and
\begin{equation}  \label{strong}
[F^{p}]_{\xi\xi}(\xi)\text{ is positive definite in } \mathbb{R}^{N}\setminus\{0\}.
\end{equation}

The hypothesis \eqref{strong}  ensures that the operator $\Qp $ defined in \eqref{Qp} is elliptic, hence there exists a positive constant $\gamma $ such that
\begin{equation}
\sum_{i,j=1}^{n}\frac{\partial }{\partial \xi _{j}}({F^{p-1}(\eta )F}_{\xi
_{i}}\left( \eta \right) ){\xi _{i}\xi _{j}}\geq \gamma |\eta |^{p-2}|\xi
|^{2},  \label{ellip}
\end{equation}
for all $\eta \in \mathbb{R}^{n}\setminus \{0\}$ and for all $\xi \in
\mathbb{R}^{n}$. 
We recall that the {\it polar} function $F^{o}\colon \mathbb{R}^{N}\rightarrow \lbrack 0,+\infty
\lbrack $ of $F$ is defined by
\begin{equation*}
F^{o}(v)=\sup_{\xi \in \mathbb{R}^N\setminus \{0\}}\frac{\langle \xi ,v\rangle }{F(\xi )},
\end{equation*}
for all $v\in \mathbb{R}^N$, where $\langle \xi ,v\rangle $ denotes the scalar product of $\xi$ and $v$.
It is easy to verify that also $F^{o}$ is a convex function which satisfies
properties \eqref{eq:omo} and \eqref{eq:lin}. Furthermore,
\begin{equation*}
F(v)=\sup_{\xi \in \mathbb{R}^N\setminus \{0\}  }\frac{\langle \xi ,v\rangle }{F^{o}(\xi )},
\end{equation*}
for all $v\in \mathbb{R}^N$.
From the above property it holds that
\begin{equation*}
|\langle \xi ,\eta \rangle |\leq F(\xi )F^{o}(\eta ),\qquad \forall \xi
,\eta \in \mathbb{R}^{N}.  \label{imp}
\end{equation*}
The set
\begin{equation*}
\mathcal{W}=\{\xi \in \mathbb{R}^{N}\colon F^{o}(\xi )<1\}
\end{equation*}
is called {\it Wulff shape} centered at the origin. We put $\kappa _{N}=|\mathcal{W}|$, where $|\mathcal{W}|$
denotes the Lebesgue measure of $\mathcal{W}$. More generally, we denote with $\mathcal{W}_{r}(x_{0})$ the
set $x_0+r\mathcal{W}$, that is the Wulff shape centered at $x_{0}$ with
measure $\kappa _{N}r^{N}$, and $\mathcal{W}_{r}(0)=\mathcal{W}_{r}$.


We also recall  the following properties of $F$ and $F^o$ (see e.g. \cite{BP}):
\begin{gather*}  \label{terza}
\langle F_\xi(\xi) , \xi \rangle= F(\xi), \quad \langle F_\xi^{o} (\xi), \xi
\rangle = F^{o}(\xi),\qquad \forall \xi \in \mathbb{R}^N\setminus \{0\} \\
F( F_\xi^o(\xi))=F^o( F_\xi(\xi))=1,\quad \forall \xi \in \mathbb{R}
^N\setminus \{0\}, \\
F^o(\xi) F_\xi( F_\xi^o(\xi) ) = F(\xi) F_\xi^o\left( F_\xi(\xi) \right) =
\xi\qquad \forall \xi \in \mathbb{R}^N\setminus \{0\}.
\end{gather*}



Let  $\Omega $ be a bounded open subset of $\mathbb{R}^{N}$   and  $p\in ]1,\infty [ $.
We recall that  the eigenvalue problem  \eqref{intro_eigpb} has to be considered  in the weak form, which means that the unknown $u$ belongs to the standard Sobolev space $W^{1,p}_0(\Omega )$ and satisfies the equation
\begin{equation}  \label{eigpb}
\int_{ \Omega  }F^{p-1}\left( Du\right) F_{\xi }\left(
Du\right) \cdot D\varphi ^{T}dy=\lambda \int_{ \Omega \,
}|u|^{p-2}u\varphi dy\, ,
\end{equation}
for all $\varphi \in W^{1,p}_0(\Omega )$.
We observe that in this paper the elements of $\mathbb{R}^N$ are thought as row vectors and that  the inverse and transpose
of a matrix $M$ are denoted by  $M^{-1}$ and $M^{T}$ respectively.

Note that if $u\in W^{1,p}_0(\Omega )$ is a non-trivial solution to the previous equation, then choosing $\varphi =u$ in \eqref{eigpb} and using  Euler's homogeneous function Theorem, yield the equality $\lambda =\int_{\Omega }F^{p}(D u )\ dx /  \int_{\Omega }| u |^{p}\ dx$ which suggests that the smallest eigenvalue, denote by
 $\lambda _{p}(\Omega )$, is positive and has the following
  variational characterisation
\begin{equation}
\lambda _{p}(\Omega )=\min_{u \in W_{0}^{1,p}(\Omega )\setminus
\{0\}}\frac{\displaystyle\int_{\Omega }F^{p}(D u )\ dx}{
\displaystyle\int_{\Omega }| u |^{p}\ dx}.  \label{rayleigh}
\end{equation}
In fact, the following known results hold, see e.g.,  \cite
{bfk,dgp2}.

\begin{thm}
If $p>1$  and  $\Omega$ is a bounded open set in $\mathbb{R}^{N}$  there
exists a function $u_{1}\in W^{1,p}_0(\Omega)$
which achieves the minimum in \eqref{rayleigh}, and satisfies the problem
\eqref{eigpb} with $\lambda=\lambda _{p}(\Omega )$. Moreover, if $\Omega$ is
connected, then $\lambda _{p}(\Omega )$ is simple, that is, the corresponding
eigenfunction is unique up to a multiplicative constant, and the first
eigenfunction has constant sign in $\Omega$. Finally, if in addition $\Omega$ is of class $C^{1,\alpha}$ with $\alpha \in ]0,1]$ then
$u_1$ is of class $C^{1,\beta}(\bar \Omega )$ for some $\beta \in ]0,\alpha ]$.
\end{thm}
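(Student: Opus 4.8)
The plan is to establish the four assertions—existence of a minimiser, the fact that it solves \eqref{eigpb}, the sign and simplicity statements, and the boundary regularity—as a chain of increasingly delicate steps. For existence I would run the direct method. Normalising by $\int_\Omega |u|^p\,dx=1$, take a minimising sequence $(u_n)\subset W^{1,p}_0(\Omega)$ for the numerator $\int_\Omega F^p(Du)\,dx$. The lower bound \eqref{eq:lin} gives $F^p(Du_n)\ge a^p|Du_n|^p$, so $(Du_n)$ is bounded in $L^p$; since $\Omega$ is bounded the Poincaré inequality bounds $(u_n)$ in $W^{1,p}_0(\Omega)$, and hence, up to a subsequence, $u_n\rightharpoonup u_1$ weakly in $W^{1,p}_0(\Omega)$ and, by Rellich--Kondrachov, strongly in $L^p(\Omega)$. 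Strong convergence preserves the constraint, so $\|u_1\|_{L^p(\Omega)}=1$ and $u_1\neq0$; the weak lower semicontinuity of $u\mapsto\int_\Omega F^p(Du)\,dx$, which holds because $F^p$ is convex by \eqref{strong}, then gives $\int_\Omega F^p(Du_1)\,dx\le\liminf_n\int_\Omega F^p(Du_n)\,dx=\lambda_p(\Omega)$, forcing $u_1$ to be a minimiser. To obtain \eqref{eigpb} I would compute the first variation of the Rayleigh quotient at $u_1$: for $\varphi\in W^{1,p}_0(\Omega)$ the map $t\mapsto \int_\Omega F^p(Du_1+tD\varphi)\,dx\big/\int_\Omega|u_1+t\varphi|^p\,dx$ is differentiable at $t=0$ (using $F\in C^2(\R^N\setminus\{0\})$ and that $\tfrac1p[F^p]_\xi=F^{p-1}F_\xi$ extends continuously by $0$ at the origin for $p>1$), and the vanishing of its derivative is exactly \eqref{eigpb} with $\lambda=\lambda_p(\Omega)$.

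Constancy of sign follows from the evenness \eqref{eq:omo}: since $F^p(D|u_1|)=F^p(Du_1)$ a.e., the function $|u_1|$ is again a minimiser, hence a nonnegative weak solution of \eqref{eigpb}, and a Harnack inequality (equivalently, the strong maximum principle for $\Qp$, whose ellipticity is guaranteed by \eqref{ellip}) forces $|u_1|>0$ in $\Omega$; thus $u_1$ never vanishes, and being continuous on the connected set $\Omega$ it has constant sign.

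The core of the theorem is simplicity, which I expect to be the main obstacle, and the plan is the ``hidden convexity'' argument. Given two positive, $L^p$-normalised first eigenfunctions $u,v$, set $\sigma_t=\big((1-t)u^p+tv^p\big)^{1/p}$ for $t\in[0,1]$, which lies in $W^{1,p}_0(\Omega)$. Writing $a=(1-t)u^p$ and $b=tv^p$, a chain-rule computation gives $D\sigma_t=\sigma_t\big(\tfrac{a}{a+b}\,\tfrac{Du}{u}+\tfrac{b}{a+b}\,\tfrac{Dv}{v}\big)$, so the $p$-homogeneity and convexity of $F^p$ yield the pointwise bound
\begin{equation*}
F^p(D\sigma_t)\le (1-t)F^p(Du)+tF^p(Dv)\qquad\text{a.e. in }\Omega .
\end{equation*}
Integrating and using $\|\sigma_t\|_{L^p(\Omega)}=1$ shows $\sigma_t$ is itself a minimiser, whence the inequality above is an equality a.e.; the strict convexity of $F^p$ encoded in \eqref{strong} then forces $Du/u=Dv/v$ a.e., i.e.\ $D(u/v)=0$, and connectedness of $\Omega$ gives $u=cv$. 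The delicate points are verifying that $\sigma_t$ is an admissible test function, justifying the chain rule producing $D\sigma_t$ for merely $W^{1,p}$ functions, and passing rigorously from a.e.\ equality in the convexity inequality to the gradient identity $Du/u=Dv/v$ (handling the sets where gradients vanish).

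Finally, the $C^{1,\beta}(\overline{\Omega})$ regularity I would extract from quasilinear regularity theory. Writing $\Qp u=\dive A(Du)$ with $A(\xi)=F^{p-1}(\xi)F_\xi(\xi)$, the bounds $a|\xi|\le F(\xi)\le b|\xi|$ together with the ellipticity \eqref{ellip} show that $A$ satisfies the standard $p$-growth and monotonicity structure conditions. A Moser iteration gives $u_1\in L^\infty(\Omega)$, the interior $C^{1,\gamma}$ estimates of DiBenedetto and Tolksdorf apply, and, since $\Omega$ is of class $C^{1,\alpha}$, Lieberman's boundary regularity theory upgrades this to $u_1\in C^{1,\beta}(\overline{\Omega})$ for some $\beta\in\,]0,\alpha]$.
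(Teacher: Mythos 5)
The paper does not prove this theorem itself; it states it as a known result and refers to \cite{bfk,dgp2}. Your argument is essentially the standard proof found in those references --- direct method for existence, first variation of the Rayleigh quotient for the Euler--Lagrange equation, the hidden convexity of $u\mapsto\int_\Omega F^p(Du)\,dx$ along $\sigma_t=((1-t)u^p+tv^p)^{1/p}$ for simplicity (together with the strict convexity from \eqref{strong} in the equality case), and Moser iteration plus Lieberman's boundary estimates for the $C^{1,\beta}(\overline{\Omega})$ regularity --- and it is correct, with the genuinely delicate points (admissibility of $\sigma_t$, the chain rule, and the passage from a.e.\ equality to $Du/u=Dv/v$) appropriately identified rather than glossed over.
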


We recall  the scaling and monotonicity properties of $
\lambda _{p}(\Omega )$ in the following lemma the proof of which can be carried out as in the case of the classical $p$-Laplacian.

\begin{lem}\label{monoto}
Let $\Omega$ be a bounded open set in $\mathbb{R}^{N}$.  Then the
following properties hold.

\begin{enumerate}
\item For $t>0$ it holds $\lambda_{p}
(t\Omega)=t^{-p}\lambda_{p}(\Omega) $.

\item If  $\tilde \Omega $ is an open set contained in $\Omega$  then $\lambda_{p}
(\tilde \Omega)\ge \lambda_{p} (\Omega)$.

\item If $1<p<s<+\infty $ then $p[\lambda _{p}(\Omega
)]^{1/p}\leq s[\lambda _{s}(\Omega )]^{1/s}$.
\end{enumerate}
\end{lem}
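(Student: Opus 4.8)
The plan is to read off all three statements directly from the variational characterisation \eqref{rayleigh}, using essentially only the one-homogeneity \eqref{eq:omo} of $F$ together with Hölder's inequality for the last item. Items (1) and (2) are routine and I would dispatch them first.

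For the \textbf{scaling property (1)}, fix $t>0$ and set up the bijection $u\mapsto v$, $v(x)=u(x/t)$, between $W^{1,p}_0(\Omega)\setminus\{0\}$ and $W^{1,p}_0(t\Omega)\setminus\{0\}$. Since $Dv(x)=t^{-1}Du(x/t)$ and $F$ is one-homogeneous, $F^p(Dv(x))=t^{-p}F^p(Du(x/t))$; the change of variables $y=x/t$, $dx=t^N dy$, then gives $\int_{t\Omega}F^p(Dv)=t^{N-p}\int_\Omega F^p(Du)$ and $\int_{t\Omega}|v|^p=t^N\int_\Omega|u|^p$, so the Rayleigh quotient of $v$ on $t\Omega$ is exactly $t^{-p}$ times that of $u$ on $\Omega$. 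Taking infima over the matched admissible classes yields $\lambda_p(t\Omega)=t^{-p}\lambda_p(\Omega)$. For the \textbf{monotonicity property (2)}, extension by zero embeds $W^{1,p}_0(\tilde\Omega)$ into $W^{1,p}_0(\Omega)$ while leaving both integrals in \eqref{rayleigh} unchanged; hence the class of competitors for $\tilde\Omega$ is, after extension, a subset of that for $\Omega$, and the infimum over the smaller class cannot decrease, giving $\lambda_p(\tilde\Omega)\ge\lambda_p(\Omega)$.

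The substantive point is the \textbf{comparison (3)}. Let $u$ be the positive first $s$-eigenfunction normalised by $\int_\Omega u^s=1$, so that $\lambda_s(\Omega)=\int_\Omega F^s(Du)$, and test the $p$-Rayleigh quotient with $v=u^{s/p}$. Because $s/p>1$ and $u$ is bounded, $v\in W^{1,p}_0(\Omega)$ with $Dv=(s/p)\,u^{s/p-1}Du$, and one-homogeneity gives $F^p(Dv)=(s/p)^p\,u^{s-p}F^p(Du)$. The crux is to apply Hölder's inequality to $\int_\Omega u^{s-p}F^p(Du)$ with the conjugate exponents $s/p$ and $s/(s-p)$, which produces $\bigl(\int_\Omega F^s(Du)\bigr)^{p/s}\bigl(\int_\Omega u^s\bigr)^{(s-p)/s}=\lambda_s(\Omega)^{p/s}$ by the normalisation. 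Since $\int_\Omega|v|^p=\int_\Omega u^s=1$, this bounds the quotient and hence $\lambda_p(\Omega)\le (s/p)^p\,\lambda_s(\Omega)^{p/s}$; taking $p$-th roots and multiplying by $p$ gives $p\,\lambda_p(\Omega)^{1/p}\le s\,\lambda_s(\Omega)^{1/s}$.

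The only genuinely creative step is the choice of the power $\gamma=s/p$ in the test function $v=u^{\gamma}$: it is precisely the exponent for which the Hölder bound collapses onto the normalisation $\int_\Omega u^s=1$, so that no domain-dependent constant survives; any other $\gamma$ leaves a residual factor. Everything else is bookkeeping. I expect the main (minor) obstacle to be the technical justification that $v\in W^{1,p}_0(\Omega)$ together with the chain rule for $u\mapsto u^{s/p}$; this is standard since the eigenfunction is bounded (Moser iteration) and $t\mapsto t^{s/p}$ is $C^1$ on $[0,\infty)$ with $s/p>1$, while $Du\in L^p(\Omega)$ follows from $\Omega$ being bounded and the embedding $L^s(\Omega)\hookrightarrow L^p(\Omega)$.
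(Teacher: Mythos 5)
Your proof is correct: items (1) and (2) follow from the variational characterisation \eqref{rayleigh} exactly as you describe, and your item (3) is the standard Lindqvist-type argument (test the $p$-Rayleigh quotient with the power $u^{s/p}$ of the normalised first $s$-eigenfunction and apply H\"older with exponents $s/p$ and $s/(s-p)$). The paper itself omits the proof, remarking only that it can be carried out as for the classical $p$-Laplacian, and that classical argument is precisely the one you give.
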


  \section{Domain perturbation and stability of eigenfunctions}
\bigskip  In order to
study the dependence of $\lambda _{p}(\Omega )$ with respect to the variation of the domain $
\Omega$, we consider problem (\ref{eigpb}) in a class of domains which
are parameterized by means of diffeomorphisms defined on a  domain $\Omega $.
 More precisely, we fix a bounded  domain (i.e. a bounded connected open set) $\Omega$ of class $C^{1,\alpha}$ with $\alpha \in (0,1]$  and we set
\begin{equation*}
\Phi _{1,\alpha }\left( \Omega \right) \equiv \left\{ \phi \in C^{1,\alpha }
(\overset{\_}{\Omega },\mathbb{R}^{N}):\phi \text{ is injective and det}D\phi
\left( x\right) \neq 0\text{, for all }x\in \overset{\_}{\Omega }\right\} .
\end{equation*}

\bigskip The space  $C^{1,\alpha }
(\overset{\_}{\Omega },\mathbb{R}^{N})$ is endowed with its standard norm defined by
$$
\|  \Phi \|_{  C^{1,\alpha }
(\overset{\_}{\Omega },\mathbb{R}^{N})}=    \max\{\| \phi \|_{\infty }, \| \nabla \phi \|_{\infty} , | \nabla\phi   |_{\alpha}\} \, ,$$
where $|\cdot |_{\alpha}$ denotes the usual H\"older semi-norm.

The set $\Phi _{1,\alpha }\left( \Omega \right) $ is an open subset
of $C^{1,\alpha }(\overset{\_}{\Omega },\mathbb{R}^{N})$;
moreover, also $\phi \left( \Omega \right) $ is a bounded connected
open subset of $\mathbb{R}^{N}$ of class $C^{1,\alpha }$ and $
\partial \phi \left( \Omega \right) =\phi \left( \partial \Omega \right) $
for all $\phi \in \Phi _{1,\alpha }\left( \Omega \right)$, see \cite{LL} and  \cite{Lanza} for details.

For $\phi \in \Phi _{1,\alpha }\left( \Omega \right)$,  we consider  the smallest
eigenvalue  $\lambda_p(\phi (\Omega))$    of problem   \eqref{eigpb} in the domain $\phi (\Omega)$ and we set
$$\lambda _{p,\phi}=\lambda_p(\phi (\Omega)).$$

We observe that
since the open set $\phi \left( \Omega \right) $ has a regular boundary, the corresponding
eigenfunctions attain the boundary value zero in the classical sense. It
follows that there exists a uniquely determined eigenfunction of $\lambda
_{p,\phi}$, denoted by $\ u_{p,\phi }$, such
that
\begin{equation}\label{eigen_cond}
\int_{\phi \left( \Omega \right) }|u_{p,\phi }\ |^{p}dy=1 \quad \text{and} \quad u_{p,\phi }\ (y)>0\quad \forall y\in \phi
\left( \Omega \right) ,
\end{equation}
and such that $u_{p,\phi }\in C^{1,\beta }(\overline{\phi (\Omega)})$ for some $\beta \in ]0,\alpha ]$.     In particular we
have that
\begin{equation}\label{eig}
\lambda _{p,\phi}=\min_{\varphi \in C^1_{0}(\overline{\phi (\Omega)} )\setminus
\{0\}}\frac{\displaystyle\int_{\phi(\Omega) }F^{p}(D\varphi )\ dy}{
\displaystyle\int_{\phi(\Omega) }|\varphi |^{p}\ dy}=\int_{\phi(\Omega) }F^{p}\left(Du_{p,\phi }\ \right) dy,
\end{equation}
where  $C^1_{0}(\overline{\phi (\Omega)} )$ denotes the space of functions in $C^1(\overline{\phi (\Omega)} )$ which vanish at the boundary of $\Omega$.

We plan to study the differentiability of the map which takes $\phi \in \Phi
_{1,\alpha }\left( \Omega \right) $ to $\lambda _{p,\phi}\in \mathbb{R}$. To do so, we first analyze the dependence of
the eigenfunction corresponding to $\lambda _{p,\phi}$, with respect to $\phi$. We stress the fact that the domain of $u_{p,\phi }
$ changes when $\phi $ is perturbed, hence we will not consider $u_{p,\phi }$
itself but its pull-back
\[
v_{p,\phi }\equiv u_{p,\phi }\circ \phi ,
\]
which is defined in the fixed domain $\Omega$.

\bigskip
Using a change of variable we rewrite problem (\ref{eigpb}) on $\phi \left( \Omega \right) $ into an eigenvalue problem defined in $\Omega $
which is solved by $v_{p,\phi }$. More precisely, for $\phi
\in \Phi _{1,\alpha }\left( \Omega \right) $ and $\lambda \in \left(
0,+\infty \right)$, it is easy to see that  a function $u\in W_{0}^{1,p}\left( \phi \left( \Omega
\right) \right) $ satisfies the eigenvalue problem (\ref{eigpb}) on $\phi (\Omega)$ if and only
if the function $v:= u\circ \phi $ belongs to $W_{0}^{1,p}\left( \Omega
\right) $ and satisfies the equation
\begin{eqnarray}\lefteqn{
\int_{\Omega }F^{p-1}\left( Dv\cdot (D\phi )^{-1}\right) F_{\xi }\left(
Dv\cdot (D\phi )^{-1}\right) \cdot \left[ (D\phi )^{-1}\right] ^{T}\cdot
D\varphi ^{T}|\det D\phi |dx}\nonumber \\
& & \qquad \qquad \qquad \qquad \qquad \qquad \qquad \qquad \qquad \qquad=\lambda \int_{\Omega }|v|^{p-2}v\varphi |\det
D\phi |dx,  \label{Weak_sol}
\end{eqnarray}
for all \bigskip $\varphi \in W_{0}^{1,p}\left( \Omega \right)$. Moreover,
\begin{equation*}
\lambda _{p,\phi}=\int_{\Omega }F^{p}\left(
Dv_{p,\phi }\cdot (D\phi )^{-1}\ \right) |\det D\phi |dx .
\end{equation*}
In what follows we denote the inverse
matrix of $D\phi$ with $J,$ that is $J=(D\phi )^{-1}.$

The following result on the continuous  dependence of the function
$v_{p,\phi }$ on $\phi \in \Phi _{1,\alpha }\left( \Omega \right) $ holds.

\begin{thm}
\label{Th_contuinity}Let $p>1$ and $\Omega $ be a bounded domain
in  $\mathbb{R}^{N}$ of class $C^{1,\alpha }$ with $\alpha \in
\left( 0,1\right]$. The function which maps $\phi \in \Phi _{1,\alpha
}\left( \Omega \right) $ to $v_{p,\phi }\in
C^{1}(\overline{\Omega })$ is continuous.
\end{thm}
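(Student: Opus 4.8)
The plan is to prove \emph{sequential} continuity, which suffices since $\Phi_{1,\alpha}(\Omega)$ is an open subset of the Banach space $C^{1,\alpha}(\overline{\Omega},\mathbb{R}^N)$. I fix $\phi_0\in\Phi_{1,\alpha}(\Omega)$ and a sequence $\phi_n\to\phi_0$ in $C^{1,\alpha}(\overline{\Omega},\mathbb{R}^N)$, and I aim to show $v_{p,\phi_n}\to v_{p,\phi_0}$ in $C^1(\overline{\Omega})$. Writing $J_n=(D\phi_n)^{-1}$, the convergence $\phi_n\to\phi_0$ forces $J_n\to J_0$ and $|\det D\phi_n|\to|\det D\phi_0|$ uniformly on $\overline{\Omega}$; moreover, for $n$ large these quantities are uniformly bounded, with $|\det D\phi_n|$ uniformly bounded away from zero. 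This uniform control of the transplanted coefficients drives the whole argument. The proof has two halves: (i) a uniform a priori estimate yielding compactness in $C^1(\overline{\Omega})$, and (ii) identification of every limit point as $v_{p,\phi_0}$, via the simplicity and the sign of the first eigenfunction.

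First I would record the continuity of the eigenvalue. Transplanting the Rayleigh quotient in \eqref{eig} to $\Omega$ gives $\lambda_{p,\phi}=\min_{v\in W_0^{1,p}(\Omega)\setminus\{0\}}R_\phi(v)$, where $R_\phi(v)=\big(\int_\Omega F^p(Dv\cdot J)|\det D\phi|\,dx\big)\big/\big(\int_\Omega|v|^p|\det D\phi|\,dx\big)$. Using the fixed function $v_{p,\phi_0}$ as a competitor for $\phi_n$ and letting $n\to\infty$ yields $\limsup_n\lambda_{p,\phi_n}\le R_{\phi_0}(v_{p,\phi_0})=\lambda_{p,\phi_0}$, because the coefficients converge uniformly. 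For the reverse inequality I take the eigenfunctions $v_{p,\phi_n}$ themselves: by \eqref{eq:lin} and \eqref{eig} the quantity $\int_\Omega|Dv_{p,\phi_n}\cdot J_n|^p|\det D\phi_n|\,dx$ is bounded by $a^{-p}\lambda_{p,\phi_n}$, and since $J_n^{-1}=D\phi_n$ and $|\det D\phi_n|$ are uniformly controlled this gives a uniform bound on $\|v_{p,\phi_n}\|_{W_0^{1,p}(\Omega)}$. Extracting a weakly convergent subsequence and invoking the weak lower semicontinuity of the convex principal part together with the strong convergence of the coefficients and the compact embedding $W_0^{1,p}(\Omega)\hookrightarrow L^p(\Omega)$ gives $\liminf_n\lambda_{p,\phi_n}\ge\lambda_{p,\phi_0}$. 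Hence $\lambda_{p,\phi_n}\to\lambda_{p,\phi_0}$, and in particular the $\lambda_{p,\phi_n}$ are bounded.

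Next comes the compactness step, which I expect to be the main obstacle. The function $v_{p,\phi_n}$ solves \eqref{Weak_sol} on the fixed domain $\Omega$ with eigenvalue $\lambda_{p,\phi_n}$; this is a quasilinear elliptic equation whose principal part carries the anisotropic $p$-structure \eqref{strong}--\eqref{ellip} conjugated by $J_n$, and whose coefficients are H\"older continuous with $C^{0,\alpha}$-norms bounded uniformly in $n$. Since $|\det D\phi_n|$ is uniformly bounded away from $0$ and the $J_n$ are uniformly bounded, the ellipticity is uniform, so the same regularity theory that yields $u_{p,\phi}\in C^{1,\beta}(\overline{\phi(\Omega)})$ applies on $\Omega$ and delivers a bound $\|v_{p,\phi_n}\|_{C^{1,\beta}(\overline{\Omega})}\le C$ with $\beta\in(0,\alpha]$ and $C$ independent of $n$; establishing this \emph{uniform} estimate is the technical heart of the argument. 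Granting it, the compact embedding $C^{1,\beta}(\overline{\Omega})\hookrightarrow C^1(\overline{\Omega})$ lets me extract a subsequence (not relabelled) with $v_{p,\phi_n}\to v^{\ast}$ in $C^1(\overline{\Omega})$.

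Finally I would identify $v^{\ast}$. Set $A(\eta)=F^{p-1}(\eta)F_{\xi}(\eta)$; being $(p-1)$-homogeneous with $p>1$, it extends continuously to $\eta=0$ by $A(0)=0$, so the uniform convergence $Dv_{p,\phi_n}\cdot J_n\to Dv^{\ast}\cdot J_0$ gives $A(Dv_{p,\phi_n}\cdot J_n)\to A(Dv^{\ast}\cdot J_0)$ uniformly. Passing to the limit in \eqref{Weak_sol} term by term, using $\lambda_{p,\phi_n}\to\lambda_{p,\phi_0}$ and the uniform convergence of $J_n$, of $|\det D\phi_n|$ and of $|v_{p,\phi_n}|^{p-2}v_{p,\phi_n}$, shows that $v^{\ast}$ satisfies \eqref{Weak_sol} with $\phi=\phi_0$ and $\lambda=\lambda_{p,\phi_0}$. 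Moreover $v^{\ast}\ge 0$ as a uniform limit of the positive functions $v_{p,\phi_n}$, and the normalization $\int_\Omega|v_{p,\phi_n}|^p|\det D\phi_n|\,dx=1$ passes to the limit to give $\int_\Omega|v^{\ast}|^p|\det D\phi_0|\,dx=1$, so $v^{\ast}\not\equiv 0$. Since a non-trivial non-negative eigenfunction can correspond only to the first, simple eigenvalue, the uniqueness assertion of the cited existence theorem forces $v^{\ast}=v_{p,\phi_0}$. As the limit is the same along every convergent subsequence, the full sequence converges, $v_{p,\phi_n}\to v_{p,\phi_0}$ in $C^1(\overline{\Omega})$, which is the claimed continuity.
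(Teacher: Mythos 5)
Your proposal is correct and follows essentially the same route as the paper: a uniform $C^{1,\beta}(\overline{\Omega})$ bound for the transplanted eigenfunctions $v_{p,\phi_n}$ obtained from the boundary regularity theory of \cite{Lib} applied to the equation on the fixed domain $\Omega$ (the paper makes the step you "grant" precise by verifying the structure conditions \eqref{Lem_H1}--\eqref{Lem_H4} uniformly on a neighbourhood of $\tilde{\phi}$, with the requisite uniform $L^\infty$ bound coming from the estimate of \cite{dgp2} combined with domain monotonicity of the eigenvalues), followed by compactness of $C^{1,\beta}(\overline{\Omega})$ in $C^{1}(\overline{\Omega})$ and identification of the limit. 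The only differences are cosmetic: you prove full continuity of $\lambda_{p,\phi}$ via the Rayleigh quotient, whereas the paper only needs the upper bound $\lambda_{p,\phi_n}\le\lambda_p(W)$ for a fixed $W$ compactly contained in $\tilde{\phi}(\Omega)$, and you spell out the identification of the limit through simplicity and positivity of the first eigenfunction together with the subsequence principle, which the paper leaves implicit.
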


\begin{proof}

Let $\tilde{\phi }\in\Phi _{1,\alpha }\left( \Omega \right) $ be fixed, and let us consider a sequence $\phi _{n}$, $n\in \mathbb{N}$,  which converges to $\tilde{\phi}$ in $\Phi _{1,\alpha }\left( \Omega \right)$. We want to prove
that there exists a subsequence $\phi _{n_{m}}$, $m\in {\mathbb{N}}$,  of $\phi _{n}$ such that $
v_{p,\phi _{n_{m}}}$ converges to $v_{p,\tilde{\phi }}$ in $C^{1}
(\overline{\Omega }).$ First of all we observe that $u_{p,\phi_n }$ is bounded.
Indeed, see \cite{dgp2}, there exists a constant $C=C\left(
N,p,F\right) $ such that
\begin{equation}
\left\Vert u_{p,\phi _{n}}\right\Vert _{\infty }\leq C\lambda _{p,\phi
_{n}}\left\Vert u_{p,\phi _{n}}\right\Vert _{1}.  \label{L^infty}
\end{equation}

Let $W$ be bounded open set (for example, a ball or a Wulf shape) with closure striclty contained in $\tilde\phi (\Omega)$. As in \cite[Prop.~2.1]{Lamb}, by the strong convergence of $\phi_n$ to $\tilde \phi$ one can see that $W$ is contained in $\phi_n(\Omega)$ for all $n$ sufficiently large.
Recalling that the eigenvalues are monotone with respect to domain inclusion, we get
\begin{equation}\label{eig_upb}
\lambda _{p,\phi_{n}}\leq \lambda _{p} (W),
\end{equation}
for all $n$ sufficiently large.
By (\ref{L^infty}), \eqref{eig_upb}, H\"{o}lder inequality and \eqref{eigen_cond} we get
\begin{equation*}
\left\Vert u_{p,\phi _{n}}\right\Vert _{\infty }\leq
C_1 ,
\end{equation*}
with $C_1>0$. In particular
\begin{equation}
\sup_{n\in \mathbb{N}}\left\Vert v_{p,\phi _{n}}\right\Vert _{\infty } \ne
\infty .  \label{L^infty_v}
\end{equation}

By rewriting \eqref{Weak_sol} in a simpler form,  we see that functions $v_{p,\phi _{n}}$ satisfy the equation
\begin{equation*}
\text{div}A(\phi _{n},x,Dv_{p,\phi _{n}})+B(\phi _{n},x,v_{p,\phi _{n}})=0
\end{equation*}
in the weak sense, where $A,B$ are defined by
\begin{eqnarray}
A\left( \phi ,x,\zeta \right) &=&F^{p-1}\left( \zeta J\right) F_{\xi}\left( \zeta
J\right) J^{T}       |\det \text{ }D\phi (x)|  \label{def_A_B} \\
B(\phi ,x,z) &=&\lambda _{p,\phi}|\det D\phi (x)||z|^{p-2}z  \notag
\end{eqnarray}
for all $x\in \overline{\Omega },\zeta \in \mathbb{R}^{N}\setminus
\left\{ 0\right\} ,\phi \in \Phi _{1,\alpha }\left( \Omega \right)$ and $z\in {\mathbb{R}}$.

Let $M_0>0$ be fixed.  We claim that there exists an open
neighborhood $\ \mathcal{V}$ $\ $of $\ \tilde{\phi \text{ }}$ and $c_{1},c_{2}\in (0,+\infty )$ such that
\begin{equation}
\sum\limits_{i,j=1}^{n}    \frac{\partial A_{i}\left( \phi ,x,\zeta
\right) }{\partial \zeta _{j}   }          \eta _{i}\eta
_{j}\geq c_{1}|\zeta |^{p-2}|\eta |^{2},  \label{Lem_H1}
\end{equation}

\begin{equation}
\left|   \frac{\partial A_{i}\left( \phi ,x,\zeta
\right) }{\partial \zeta _{j}}     \right|\leq c_{2}|\zeta |^{p-2},  \label{Lem_H2}
\end{equation}

\begin{equation}
|A\left( \phi ,x,\zeta \right) -A\left( \phi ,y,\zeta \right) |\leq c_{2}\left(
1+|\zeta|\right) ^{p-1}\left\vert x-y\right\vert ^{\alpha },  \label{Lem_H3}
\end{equation}

\begin{equation}
|B\left( \phi ,x,z\right) |\leq c_{2}\left( 1+|\zeta |\right) ^{p},
\label{Lem_H4}
\end{equation}
for all $\phi \in \mathcal{V},x,y\in \overline{\Omega },\zeta\in \mathbb{R}^{n}
\setminus \left\{ 0\right\} $ and $z\in \left( -M_{0},M_{0}\right) $.


Indeed, condition (\ref{Lem_H1}) follows by condition (\ref{ellip}) and a continuity
argument. Condition (\ref{Lem_H2}) follows observing that $F_{\xi }$ is
zero homogenous and using a continuity argument. Condition (\ref{Lem_H3}) and (\ref{Lem_H4})
follow using (\ref{eq:omo}) and reasoning as Lemma 3.1 of
\cite{Lamb}.

Clearly, $\phi_n\in {\mathcal{V}}$ for all $n\in \mathbb{N}$  sufficiently large, say for $n\geq \bar n$,  hence conditions
\eqref{Lem_H1}-\eqref{Lem_H4} are satisfied with $\phi$ replaced by $\phi_n$.
Thus,  by (\ref{L^infty_v}) we
are in position to apply the boundary regularity result in \cite{Lib}, which assures that there exists a constant $K>0$, such that

\begin{equation*}
v_{p,\phi _{n}}\in C^{1,\beta }(\overline{\Omega })\quad \text{and}\quad \left\Vert v_{p,\phi _{n}}\right\Vert _{1,\beta }\leq K,
\end{equation*}
for all $n\geq \overline{n}$. The thesis follows recalling that $C^{1,\beta}(\overline{\Omega})$ is compactly imbedded in $C^{1}(\overline{\Omega})$ and using standard regularity results.
\end{proof}

\section{Differentiability result and Hadamard formula}
\begin{thm}
\label{Frechet_diff}\bigskip Let $p>1$ and $\Omega $ be a bounded domain in $\mathbb{R}^{N}$ of class $C^{1,\alpha },$ with $\alpha
\in (0,1]$. Then the function which maps $\phi \in \Phi _{1,\alpha }\left(
\Omega \right) $ to $\lambda _{p,\phi}$ is of class $C^{1}.$
Moreover,
\begin{align}\label{d_egvalue}
  \text{\emph{d}}\lambda _{p,\phi}(\psi )= &  \int_{\phi (\Omega )}\left( F^{p}\left( Du_{p,\phi}\right) - \lambda
_{p,\phi }|u_{p,\phi}|^{p}\right)  \text{\emph{div}}\left( \psi \circ \phi ^{-1}\right)
dy \\
  & -p\int_{\phi (\Omega )}F^{p-1}\left( Du_{p,\phi}\right) F_{\xi }\left(
Du_{p,\phi}\right) \cdot  ( D\left( \psi \circ \phi ^{-1}\right)  )^T  \cdot \left(
Du_{p,\phi}\right) ^{T}dy, \nonumber
\end{align}
for all $\phi \in \Phi _{1,\alpha }\left( \Omega \right)$ and $\psi \in
C^{1,\alpha }\left( \overline{\Omega },\mathbb{R}^{N}\right) .$
\end{thm}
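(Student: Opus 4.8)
The plan is to transport the problem to the fixed domain $\Omega$ and regard $\lambda_{p,\phi}$ as a constrained minimum depending on the parameter $\phi$. On $\Phi_{1,\alpha}(\Omega)\times W^{1,p}_0(\Omega)$ I introduce
$$\mathcal{F}(\phi,v)=\int_{\Omega}F^{p}(Dv\, J)\,|\det D\phi|\,dx,\qquad \mathcal{G}(\phi,v)=\int_{\Omega}|v|^{p}\,|\det D\phi|\,dx,$$
where $J=(D\phi)^{-1}$, and I set $\mathcal{R}(\phi,v)=\mathcal{F}(\phi,v)/\mathcal{G}(\phi,v)$. By the change of variables producing \eqref{Weak_sol}, the pulled-back eigenfunction $v_{p,\phi}$ minimizes $\mathcal{R}(\phi,\cdot)$, is normalized by $\mathcal{G}(\phi,v_{p,\phi})=1$, and gives $\lambda_{p,\phi}=\mathcal{F}(\phi,v_{p,\phi})$; moreover $v_{p,\phi}$ is a critical point of $\mathcal{R}(\phi,\cdot)$, i.e. $\mathcal{F}'_v(\phi,v_{p,\phi})=\lambda_{p,\phi}\,\mathcal{G}'_v(\phi,v_{p,\phi})$, which is just the weak equation \eqref{Weak_sol}. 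I would first establish the G\^ateaux derivative of $\phi\mapsto\lambda_{p,\phi}$ and then upgrade it to $C^1$.

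Fix $\psi\in C^{1,\alpha}(\overline{\Omega},\mathbb{R}^N)$ and set $\phi_t=\phi+t\psi$, admissible for small $|t|$ since $\Phi_{1,\alpha}(\Omega)$ is open. For a fixed test function the map $\phi\mapsto\mathcal{R}(\phi,w)$ is smooth, and the minimality of the two eigenfunctions yields the sandwich
$$\mathcal{R}(\phi_t,v_{p,\phi_t})-\mathcal{R}(\phi,v_{p,\phi_t})\ \le\ \lambda_{p,\phi_t}-\lambda_{p,\phi}\ \le\ \mathcal{R}(\phi_t,v_{p,\phi})-\mathcal{R}(\phi,v_{p,\phi}).$$
Dividing by $t$, the right-hand quotient converges to $\partial_\phi\mathcal{R}(\phi,v_{p,\phi})[\psi]$ as $t\to0$; for the left-hand quotient I apply the mean value theorem in $\phi$ and invoke Theorem~\ref{Th_contuinity}, by which $v_{p,\phi_t}\to v_{p,\phi}$ in $C^1(\overline{\Omega})$, so by continuity of $\partial_\phi\mathcal{R}$ it converges to the same limit. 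Hence the G\^ateaux derivative exists and, since $\mathcal{G}(\phi,v_{p,\phi})=1$ and $\mathcal{F}(\phi,v_{p,\phi})=\lambda_{p,\phi}$, equals
$$\mathrm{d}\lambda_{p,\phi}(\psi)=\mathcal{F}'_\phi(\phi,v_{p,\phi})[\psi]-\lambda_{p,\phi}\,\mathcal{G}'_\phi(\phi,v_{p,\phi})[\psi].$$
The crucial point is that the variation of the eigenfunction never appears: the terms in $\dot v$ cancel because $v_{p,\phi}$ is critical for $\mathcal{R}(\phi,\cdot)$ and $\mathcal{G}(\phi,v_{p,\phi})$ is held equal to $1$.

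It remains to compute the two partial $\phi$-derivatives and return to $\phi(\Omega)$. From $\frac{d}{dt}|\det D\phi_t|\big|_{0}=|\det D\phi|\,\mathrm{tr}(J\,D\psi)$ and $\frac{d}{dt}J_t\big|_{0}=-J\,D\psi\,J$, together with $[F^p]_\xi=pF^{p-1}F_\xi$, one gets $\mathcal{G}'_\phi(\phi,v)[\psi]=\int_\Omega|v|^p|\det D\phi|\,\mathrm{tr}(JD\psi)\,dx$ and
$$\mathcal{F}'_\phi(\phi,v)[\psi]=\int_\Omega\Big(F^p(Dv\,J)\,\mathrm{tr}(JD\psi)-pF^{p-1}(Dv\,J)F_\xi(Dv\,J)\cdot\big(Dv\,J\,D\psi\,J\big)^T\Big)|\det D\phi|\,dx.$$
Now change variables $y=\phi(x)$, using $|\det D\phi|\,dx=dy$, $(Dv_{p,\phi}\,J)\circ\phi^{-1}=Du_{p,\phi}$, $\mathrm{tr}(JD\psi)\circ\phi^{-1}=\dive(\psi\circ\phi^{-1})$ and $(D\psi\,J)\circ\phi^{-1}=D(\psi\circ\phi^{-1})$; the last two identities are the chain rule for $\psi\circ\phi^{-1}$. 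Then $Dv\,J\,D\psi\,J$ transforms into $Du_{p,\phi}\,D(\psi\circ\phi^{-1})$, so $\big(Dv\,J\,D\psi\,J\big)^T=\big(D(\psi\circ\phi^{-1})\big)^T(Du_{p,\phi})^T$. Substituting and merging the two terms carrying $\dive(\psi\circ\phi^{-1})$ gives exactly \eqref{d_egvalue}.

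For the $C^1$ assertion, the right-hand side of \eqref{d_egvalue} is plainly linear and bounded in $\psi$, and by Theorem~\ref{Th_contuinity} and the continuity of $\lambda_{p,\phi}$ it depends continuously on $\phi$; the standard criterion (G\^ateaux differentiable with continuous, linear, bounded differential) then yields Fr\'echet differentiability and the $C^1$ regularity of $\phi\mapsto\lambda_{p,\phi}$. I expect the main obstacle to be the rigorous justification of the sandwich limit: its lower bound involves $Dv_{p,\phi_t}$, so passing to the limit genuinely needs the $C^1(\overline\Omega)$-convergence of eigenfunctions from Theorem~\ref{Th_contuinity}. This is precisely the device that sidesteps differentiating the eigenfunction directly—a delicate matter in the quasilinear setting, where the operator linearized at $v_{p,\phi}$ degenerates on $\{Du_{p,\phi}=0\}$.
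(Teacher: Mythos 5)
Your proposal is correct and follows essentially the same route as the paper: both reduce the problem to the partial derivative $\partial_\phi R_\phi(v_{p,\phi})$ of the transplanted Rayleigh quotient, relying on Theorem~\ref{Th_contuinity} for the continuity of the eigenfunctions, and then compute that derivative via $\phi_t=\phi+t\psi$ and a change of variables back to $\phi(\Omega)$. The only difference is that you write out explicitly the sandwich/mean-value argument that the paper delegates to the reference \cite{Lamb}.
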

\begin{proof}  We consider the following real-valued functional
\begin{equation*}
R_{\phi }(v)=\frac{\int_{\Omega }F^{p}\left( Dv\cdot \left( D\phi \right)
^{-1}\right) |\det D\phi |dx}{\int_{\Omega }|v|^{p}|\det D\phi |dx},
\end{equation*}
defined for all $\left( v,\phi \right) \in C_{0}^{1}\left(
\overline{\Omega }  \right) \setminus \left\{ 0\right\}  \times \Phi _{1,\alpha
}\left( \Omega \right) $.
Since the map
\[
\phi \in \Phi _{1,\alpha }\left( \Omega \right) \rightarrow \left(
D\phi \right) ^{-1}\in C\left( \overline{\Omega },\mathbb{R}^{N\times N}\right)
\]
is real-analytic, by exploiting   the continuity of the functions $\phi \mapsto \partial _{\phi
}R\left( v \right) $ and $\phi \mapsto v_{p,\phi }$  (see Theorem \ref{Th_contuinity}),
we can use the same argument in \cite{Lamb} to prove  that  the map $\phi \mapsto \lambda _{p,\phi }$ is continuously
Frech\'{e}t differentiable and that its Frech\'{e}t differential at any point $\tilde \phi \in \Phi _{1,\alpha
}\left( \Omega \right)  $ is given by the formula
\begin{equation*}
\text{d}\lambda _{p,\tilde{\phi} }(\psi )=\partial _{\phi }R_{\tilde{\phi \text{
}}}(v_{p,\tilde{\phi}})(\psi )
\end{equation*}
which is valid for all  $\psi \in C^{1,\alpha
}\left( \overline{\Omega },\mathbb{R}^{N}\right).$

Now we fix $\psi \in C^{1,\alpha }\left( \overline{\Omega },\mathbb{R}^{N}\right) $ and we compute $\partial _{\phi }R_{\tilde{\phi \text{
}}}(v_{p,\tilde{\phi \text{ }}})(\psi ).$ To this end let $\delta >0$ be
such that $\phi _{t}:= \tilde{\phi}+t\psi \in \Phi
_{1,\alpha }\left( \Omega \right) ,$ for all $t\in (-\delta ,\delta ).$
Putting $R_{\phi _{t}}(v_{p,\tilde \phi  }   )=\frac{N}{D}$  where
\begin{equation*}
N=\int_{\Omega }F^{p}\left( Dv_{p,\tilde \phi   }\cdot \left( D\phi _{t}\right)
^{-1}\right) |\det D\phi _{t}|dx,
\end{equation*}
and
\begin{equation*}
D=\int_{\Omega }|v_{p,\tilde \phi    }|^{p}|\det D\phi _{t}|dx,
\end{equation*}
by standard calculus  we obtain
\begin{equation*}
\left.\frac{\text{d}D}{\text{dt}}\right\vert _{t=0}=\int_{\tilde{\phi} \left( \Omega \right)
}|u_{p,\tilde{\phi}}|^{p} \,\text{div}\chi dy,
\end{equation*}
and
\begin{align*}
\left.\frac{\text{d}N}{\text{dt}} \right\vert _{t=0}=& -p\int_{\tilde{\phi}\left( \Omega
\right) }F^{p-1}(Du_{p,\tilde{\phi}})F_{\xi }(Du_{p,\tilde{\phi}})  \cdot (D\chi )^{T} \cdot
\left( Du_{p,\tilde{\phi}}\right) ^{T}dy \\
&+\int_{\tilde{\phi}\left( \Omega \right) }F^{p}(Du_{p,\tilde{\phi}})\, \text{div}
\chi dy,
\end{align*}
where $\chi =\psi \circ \left(\tilde{\phi}\right)^{-1}$.
Putting all together and recalling \eqref{eigen_cond} and \eqref{eig}, formula \eqref{d_egvalue} follows.
\end{proof}
\bigskip

For any vector $n\in \mathbb{R}^N\setminus \{0\}$, we set
$$
n_F=F_{\xi }(n ),\quad \frac{\partial u}{\partial n_F}=D u \cdot n_F\, .
$$

A consequence of the previous result is the following generalization of the Hadamard formula to the anisotropic setting.

\begin{thm}
\label{Th_Hadamard} Let the same assumptions  of Theorem \ref{Frechet_diff} hold.
 Let $\phi$ $\in \Phi _{1,\alpha }\left( \Omega
\right) $ be such that $\phi( \Omega) $ is of class $C^{2,\delta }$, for some $\delta \in \left( 0,1\right] .$ Then
\begin{eqnarray}\label{Hadamard}
\text{\emph{d}}\lambda _{p,\phi}(\psi )& =& (1-p)\int_{\partial \phi( \Omega)}   \left| F(Du_{p,\phi})  \right|^p  \left( \psi \circ \phi^{-1}\right) \cdot \nu \, d\sigma \nonumber   \\
&=& (1-p)\int_{\partial \phi( \Omega)}   \left|\frac{\partial u_{p,\phi }}{\partial \nu_F}\right|^p  \left( \psi \circ \phi^{-1}\right) \cdot \nu \, d\sigma ,
\end{eqnarray}
for all $\psi \in C^{1,\alpha }\left( \overline{\Omega },\mathbb{R}^{N}\right)$, where $\nu$ is the unit outer normal to $\partial \phi (\Omega)$.
\end{thm}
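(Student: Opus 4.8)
The plan is to take the volume formula \eqref{d_egvalue} of Theorem~\ref{Frechet_diff} as the starting point and to convert its two bulk integrals into a single boundary integral by integrating by parts, using the eigenvalue equation in strong form together with the Dirichlet condition. I work directly on $\omega:=\phi(\Omega)$ and abbreviate $u:=u_{p,\phi}$, $\lambda:=\lambda_{p,\phi}$, $V:=\psi\circ\phi^{-1}$, and $a_i:=F^{p-1}(Du)F_{\xi_i}(Du)$, so that $\mathbf a=(a_1,\dots,a_N)$ is the flux of $\Qp$ and $\mathbf a=\tfrac1p\,\nabla_\xi(F^p)(Du)$ by the chain rule. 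The extra hypothesis that $\omega$ is of class $C^{2,\delta}$ enters only to improve the regularity of $u$: since $u>0$ in $\omega$ and $u=0$ on $\partial\omega$, a Hopf-type boundary lemma forces $Du\neq0$ on $\partial\omega$, so $\Qp$ is uniformly elliptic there (via \eqref{ellip}) and Schauder estimates give $u\in C^{2}$ up to the boundary in a neighbourhood of $\partial\omega$; this is what legitimizes the integration by parts near the boundary.

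The crux is to recognize the whole integrand of \eqref{d_egvalue} as a divergence. I would introduce the vector field
\[
\Sigma:=\big(F^p(Du)-\lambda|u|^p\big)\,V-p\,\big(V\cdot(Du)^T\big)\,\mathbf a ,
\]
and verify that $\dive\Sigma$ equals the integrand of \eqref{d_egvalue}. Expanding the divergence and using, first, the strong form of the eigenvalue equation $\dive\mathbf a=-\lambda|u|^{p-2}u$ (which handles the terms containing $\nabla(|u|^p)$) and, second, the chain-rule identity $a_k\,\partial_k\partial_j u=\tfrac1p\,\partial_j[F^p(Du)]$ (which handles the terms containing $\nabla[F^p(Du)]$), one checks that all gradient-of-$F^p$ and gradient-of-$|u|^p$ contributions cancel, leaving exactly
\[
\dive\Sigma=\big(F^p(Du)-\lambda|u|^p\big)\dive V-p\,\mathbf a\cdot(DV)^T\cdot(Du)^T .
\]
Hence, by the divergence theorem, $\mathrm d\lambda_{p,\phi}(\psi)=\int_\omega\dive\Sigma\,dy=\int_{\partial\omega}\Sigma\cdot\nu\,d\sigma$.

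It then remains to evaluate $\Sigma\cdot\nu$ on $\partial\omega$. There $u=0$, so $|u|^p=0$, and because the tangential gradient of $u$ vanishes one has $Du=s\,\nu$ with $s:=Du\cdot\nu$. Using one-homogeneity $F(s\nu)=|s|F(\nu)$, the zero-homogeneity $F_\xi(s\nu)=\sign(s)F_\xi(\nu)$ and Euler's identity $F_\xi(\nu)\cdot\nu=F(\nu)$, a short computation gives $\mathbf a\cdot\nu=\sign(s)|s|^{p-1}F(\nu)^p$ and $V\cdot(Du)^T=s\,(V\cdot\nu)$, so that $\big(V\cdot(Du)^T\big)(\mathbf a\cdot\nu)=|s|^pF(\nu)^p\,(V\cdot\nu)=F^p(Du)\,(V\cdot\nu)$. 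Therefore $\Sigma\cdot\nu=(1-p)F^p(Du)\,(V\cdot\nu)$ on $\partial\omega$, which is precisely the first line of \eqref{Hadamard} (recall $F\geq0$, so $F^p(Du)=|F(Du)|^p$). The second line follows at once, since on $\partial\omega$ one has $\frac{\partial u}{\partial\nu_F}=Du\cdot F_\xi(\nu)=s\,F(\nu)$, whence $\big|\frac{\partial u}{\partial\nu_F}\big|=|s|F(\nu)=F(Du)$ and the two integrands coincide.

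The main obstacle is regularity rather than algebra: the divergence computation above uses second derivatives of $u$, whereas a priori only $u\in C^{1,\beta}(\overline\omega)$ is available. Near $\partial\omega$ this is not an issue, as explained above, but in the interior the critical set $\{Du=0\}$, where $\Qp$ degenerates and $F^p$ fails to be $C^2$, must be handled. I expect this to be the delicate point, to be resolved either by invoking second-order ($W^{2,2}_{\mathrm{loc}}$-type) regularity results for the anisotropic $p$-Laplacian or by an approximation argument in which the identity $\dive\Sigma=(\text{integrand of \eqref{d_egvalue}})$ is first established away from $\{Du=0\}$ and then extended by a limiting procedure; the remaining cancellation and the homogeneity-driven boundary reduction are otherwise routine.
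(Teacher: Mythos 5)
Your overall strategy coincides with the paper's: start from \eqref{d_egvalue}, integrate by parts to push everything to the boundary, use the equation in strong form to cancel the bulk terms, and then exploit $Du=\frac{\partial u}{\partial\nu}\nu$ together with the homogeneity of $F$ and Euler's identity to reduce the boundary integrand to $(1-p)F^p(Du)\,V\cdot\nu$. That algebra, including the final identification $F^p(Du)=\bigl|\frac{\partial u}{\partial\nu_F}\bigr|^p$, is correct and is essentially what the paper does (the paper carries out the integration by parts term by term rather than packaging the integrand as $\dive\Sigma$, which is cosmetic).

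The genuine gap is the justification of the second-order computation, which you correctly flag as the delicate point but do not resolve: neither of your two suggested remedies is carried out, and neither works as stated. The Hopf--Schauder route near the boundary is already shaky because the hypotheses give only $F\in C^2(\R^N\setminus\{0\})$, so the coefficients of the linearized operator are merely continuous and Schauder theory does not directly yield $C^2$ regularity up to the boundary. More seriously, in the interior $W^{2,2}_{\mathrm{loc}}$ regularity for the Finsler $p$-Laplacian is not available for all $p>1$; the chain-rule identity $a_k\,\partial_k\partial_j u=\frac1p\partial_j[F^p(Du)]$ degenerates on $\{Du=0\}$ (where $[F^p]_{\xi\xi}$ blows up for $p<2$); and $\{Du=0\}$ is nonempty (it contains the maximum point of the positive eigenfunction) and need not be negligible, so ``extending by a limiting procedure'' is an assertion, not a proof. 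The paper's fix is a specific construction you would need to supply: regularize the Lagrangian by $V^{(\eps)}=(F^2+\eps)^{p/2}/p$ and solve the auxiliary Dirichlet problem $-\dive\bigl(V^{(\eps)}_{\xi}(Du_{\eps})\bigr)=\lambda_{p,\phi}|u|^{p-2}u$ in $\phi(\Omega)$ with the \emph{fixed} right-hand side built from $u$ itself; this problem is nondegenerate, so $u_{\eps}\in W^{2,2}(\phi(\Omega))\cap C^{1,\gamma}(\phi(\overline{\Omega}))$ with bounds uniform in $\eps$, and $u_{\eps}\to u$ in $C^1$ along a subsequence by the Lieberman estimates. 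One then runs your divergence computation on $u_{\eps}$ (with $V^{(\eps)}$ in place of $F^p/p$) and lets $\eps\to0$, which requires only the $C^1$ convergence. Without this, or an equivalent approximation scheme, the integration by parts at the heart of your argument is not justified.
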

\begin{proof} To shorten our notation, we set $u=u_{p,\phi }$. Recall that  by classical  boundary
regularity results, there exists $\beta \in (0,1)$ such that $u\in C^{1,\beta }\left( \phi\left( \overline{\Omega }\right) \right)$, hence the right hand side of (\ref{Hadamard}) is well defined, and $
u=0$ on $\partial \phi\left( \Omega \right)$. The possible lack of $C^2$ regularity of $u$ leads us  to consider
 a family of  approximating problems,  as often done in the literature (we refer to \cite{GS, lambre} for the case of the $p$-Laplacian, to
\cite{xiathesis} for the Finsler Laplacian and to \cite{ciraolo} for the  Finsler $p$-Laplacian).
As done in  \cite[Lemma~3.2]{ciraolo} and  \cite[Theorem~2.6]{xiathesis},
consider a family of sufficiently regular real-valued convex functions $V^{(\varepsilon )}$, with $\varepsilon>0$, defined in ${\mathbb{R}}^N$ such that $V^{(\varepsilon )} $ converges to  $F^p/p$ together with the first order derivatives,
 as $\varepsilon \to 0$. To be more specific, following \cite[Lemma~3.2]{ciraolo}, we take $
V^{(\varepsilon )} = (F^2+\varepsilon  )^{\frac{p}{2}}/p$, with $\varepsilon>0$.  Then we consider the  boundary value problem
\begin{equation}
\left\{
\begin{array}{ll}
-\text{div}\left(    V^{(\varepsilon)}_{\xi }\left( Du_{\varepsilon }\right) \right) =\lambda
_{p,\phi }|u|^{p-2}u, & \text{in }\phi\left( \Omega \right),  \\
u_{\varepsilon }=0, & \text{on }\partial \phi\left( \Omega \right) ,
\end{array}
\right.   \label{prob_approx}
\end{equation}
where  we recall that $V^{(\varepsilon)}_{\xi }$ denotes the gradient of $V^{(\varepsilon)}$. By the same proof  in \cite{ciraolo},
classical regularity results (see e.g. \cite{Ladyz}) assure that there
exists a solution $u_{\varepsilon }$ to problem (\ref{prob_approx}) such
that $u_{\varepsilon }\in  W^{2,2}(\phi (\Omega ))\cap C^{1,\gamma } (  \phi (\overline{\Omega}) )  $ for some $\gamma \in (0,1) $, and $\left\vert u_{\varepsilon }\right\vert \leq M_{0},$
with $M_{0}$ a constant independent of $\varepsilon .$ Moreover, the $C^{1,\gamma}$ estimates in \cite{Lib} imply that
$u_{\varepsilon }\rightarrow u$ in $
C^{1}\left(\phi(\overline\Omega) \right) $ up to
subsequences.

Now we approximate  the first term of the right-hand side of (\ref{d_egvalue}).
In order to shorten our notation we set
 $\chi =\psi \circ \left( \tilde{\phi }\right) ^{-1}.$
Using the Divergence Theorem and recalling that $u=0$
on $\partial \phi (\Omega ),$ we get
\begin{align*}
\int_{\phi (\Omega )}&\left(  p V^{(\varepsilon)}     \left( Du_{\varepsilon }\right)
         -\lambda _{p,\phi }|u|^{p}\right)
\text{div}\chi dy
\\
& =\int_{\partial \phi (\Omega )} pV^{(\varepsilon)}     \left( Du_{\varepsilon }\right)  \chi _{j}\nu
_{j}d\sigma -\int_{\phi (\Omega )}\frac{\partial }{\partial y_{j}}\left(p
V^{(\varepsilon)}     \left( Du_{\varepsilon }\right)   -\lambda _{p,\phi }|u|^{p}\right) \chi _{j}dy.
\end{align*}
Note that here and in the sequel the summation symbol is omitted.

Now we take into account the second term of the right hand side of (\ref{d_egvalue}). Reasoning as before, by the divergence
Theorem, we get
\begin{eqnarray}\label{Eq_lim} \lefteqn{
-p\int_{\phi (\Omega )} V^{(\varepsilon)}_{\xi }\left( Du_{\varepsilon }\right)      \cdot (D\chi )^T\cdot
\left( Du_{\varepsilon }\right) ^{T}dy
 =-p\int_{\partial \phi (\Omega
)}    V^{(\varepsilon)}_{\xi_i }\left( Du_{\varepsilon }\right)       \frac{\partial u_{\varepsilon }}{\partial y_{j}}\chi _{j}\nu _{i}d\sigma
} \\
&\nonumber \qquad\qquad\qquad\qquad\qquad\qquad\qquad\qquad\qquad\qquad\qquad\qquad+p\int_{\phi (\Omega )}\frac{\partial }{\partial y_{i}}\left(    V^{(\varepsilon)}_{\xi_i }\left( Du_{\varepsilon }\right)       \frac{\partial u_{\varepsilon }}{\partial y_{j}}
\right) \chi _{j}dy.
\end{eqnarray}

We first consider the second integral of right hand side of (\ref{Eq_lim}). Recalling that $u_{\varepsilon }$ are solutions to problem (\ref{prob_approx}), we obtain
\begin{align*}
\int_{\phi (\Omega )}&\frac{\partial }{\partial y_{i}}\left(   V^{(\varepsilon ) }_{\xi_i} (Du_{\varepsilon  })    \frac{\partial u_{\varepsilon }}{\partial y_{j}}    \right)
     \chi _{j}dy
 \\
 & =\int_{\phi (\Omega )}\!\!\!\text{div}\left(       V^{(\varepsilon ) }_{\xi} (Du_{\varepsilon  }) \right) \frac{\partial u_{\varepsilon }}{\partial
y_{j}}\chi _{j}dy \!+\!\!\int_{\phi (\Omega )} \!\!     V^{(\varepsilon )}_{\xi_i}      (Du_{\varepsilon  })                \frac{\partial ^{2}u_{\varepsilon }}{\partial y_{j}\partial y_{i}}
 \chi _{j}dy
\\
&=-\lambda _{p,\phi }\int_{\phi (\Omega )}\left\vert u\right\vert ^{p-2}u
\frac{\partial u_{\varepsilon }}{\partial y_{j}}\chi _{j}dy
+
\int_{\phi (\Omega )}\frac{\partial    V^{(\varepsilon )}     (Du_{\varepsilon  })      }{\partial y_{j}}        \chi _{j}dy.
\end{align*}

Substituting in (\ref{Eq_lim}), we get
\begin{align*}
-p\int_{\phi (\Omega )}&     V^{(\varepsilon ) }_{\xi} (Du_{\varepsilon  })      \cdot (D\chi )^T\cdot
\left( Du_{\varepsilon }\right) ^{T}dy\! =-p\!\!\int_{\partial \phi (\Omega
)}\!        V^{(\varepsilon ) }_{\xi} (Du_{\varepsilon  })   \frac{\partial u_{\varepsilon }}{
\partial y_{j}}\chi _{j}\nu _{i}d\sigma  \\
&-p\lambda _{p,\phi }\int_{\phi (\Omega )}\left\vert u\right\vert ^{p-2}u
\frac{\partial u_{\varepsilon }}{\partial y_{j}}\chi _{j}dy+   p \int_{\phi
(\Omega )}\frac{\partial    V^{(\varepsilon ) } (Du_{\varepsilon  })   }{\partial y_{j}}     \chi _{j}dy.
\end{align*}
It follows that
\begin{eqnarray}\label{astana}
\lefteqn{  \int_{\phi (\Omega )}\left(    p V^{(\varepsilon ) } (Du_{\varepsilon  })      -\lambda _{p,\phi }|u|^{p}\right)
\text{div}\chi dy}\nonumber   \\
& &-p\int_{\phi (\Omega )}   V^{(\varepsilon ) }_{\xi} (Du_{\varepsilon  })       \cdot (D\chi )^T\cdot
\left( Du_{\varepsilon }\right) ^{T}dy
\nonumber \\
& & =\int_{\partial \phi (\Omega )}      pV^{(\varepsilon ) }(Du_{\varepsilon  })\chi _{j}\nu
_{j}d\sigma + \lambda _{p,\phi }\int_{\phi (\Omega )}\frac{\partial }{\partial y_{j}}\left(
       |u|^{p}\right) \chi _{j}dy \nonumber  \\
& & -p\!\!\int_{\partial \phi (\Omega
)}\!   V^{(\varepsilon ) }_{\xi_i} (Du_{\varepsilon  })     \frac{\partial u_{\varepsilon }}{
\partial y_{j}}\chi _{j}\nu _{i}d\sigma  -p\lambda _{p,\phi }\int_{\phi (\Omega )}\left\vert u\right\vert ^{p-2}u
\frac{\partial u_{\varepsilon }}{\partial y_{j}}\chi _{j}dy.
\end{eqnarray}

Taking $\varepsilon \rightarrow 0$ in \eqref{astana},  we have
\begin{eqnarray}\label{penultima}
\lefteqn{  \int_{\phi (\Omega )}\left(  F^p \left( Du \right)
      -\lambda _{p,\phi }|u|^{p}\right)
\text{div}\chi dy}\nonumber   \\
& &-p\int_{\phi (\Omega )} F^{p-1}\left( Du \right)     F_{\xi }\left( Du \right) \cdot (D\chi )^T\cdot
\left( Du \right) ^{T}dy
\nonumber \\
& & =\int_{\partial \phi (\Omega )} F ^{p } \left(
Du  \right)  \chi _{j}\nu
_{j}d\sigma + \lambda _{p,\phi }\int_{\phi (\Omega )}\frac{\partial }{\partial y_{j}}\left(
       |u|^{p}\right) \chi _{j}dy \nonumber  \\
& & -p\!\!\int_{\partial \phi (\Omega
)}\!  F ^{p-1}\left( Du  \right)  \!F_{\xi_{i}}\left( Du  \right) \frac{\partial u  }{
\partial y_{j}}\chi _{j}\nu _{i}d\sigma  \nonumber   -p\lambda _{p,\phi }\int_{\phi (\Omega )}\left\vert u\right\vert ^{p-2}u
\frac{\partial u  }{\partial y_{j}}\chi _{j}dy\\
& & =\int_{\partial \phi (\Omega )} F^{p } \left(
Du  \right)  \chi _{j}\nu
_{j}d\sigma  -p\!\!\int_{\partial \phi (\Omega
)}\!   F ^{p-1}\left( Du  \right)  \!F_{\xi_{i}}\left( Du  \right) \frac{\partial u  }{
\partial y_{j}}\chi _{j}\nu _{i}d\sigma .
\end{eqnarray}

Now since $u=0$ on $\partial \phi (\Omega )$ and $u\in C^{1}\left(
\phi\left( \overline{\Omega }\right) \right)$, it follows that $\left. Du\right\vert _{\partial \phi (\Omega )}=\frac{\partial u}{\partial
\nu }\nu ,$ so by the homogeneity of $F$ it follows that

\begin{equation}
-p\int_{\partial \phi (\Omega )}F^{p-1}\left( Du\right) F_{\xi _{i}}\left(
Du\right) \frac{\partial u}{\partial y_{j}}\chi _{j}\nu _{i}d\sigma
=-p\int_{\partial \phi (\Omega )}F^{p}\left( Du\right) \chi \cdot \nu
d\sigma .  \label{B}
\end{equation}

Finally, by Theorem \ref{Frechet_diff}, using equalities (\ref{penultima}) and (\ref{B}),
we obtain

\begin{equation}\label{Had_1}
\text{d}\lambda _{p,\phi}(\psi )=(1-p)\int_{\partial \phi\left( \Omega \right) }F^{p}\left( Du\right) \chi \cdot \nu \text{ }d\sigma
\end{equation}
which is the first equality in \eqref{Hadamard}.
Recalling that $D u = \frac{\partial u }{\partial \nu} \nu$,  it follows that
$$
F^p(  Du   )=F^p\left(  \frac{\partial u  }{\partial \nu} \nu  \right)=\left|  \frac{\partial u }{\partial \nu} F(\nu )   \right|^p=
\left|  \frac{\partial u }{\partial \nu} F_{\xi}(\nu ) \cdot \nu  \right|^p=\left|\frac{\partial u }{\partial \nu_F}\right|^p  ,
$$
which combined with \eqref{Had_1}  provides the validity of the second equality in \eqref{Hadamard}.
\end{proof}

\section{Corollaries}

In this section we briefly discuss two immediate corollaries of the previous section.

\subsection{Overdetermined problem}

As in the Euclidean case,  formula (\ref{Hadamard}) naturally leads to the formulation of an overdetermined
problem. Indeed, let us assume that the assumptions of Theorem \ref{Th_Hadamard} hold and let us consider the functionals $J_{1}$ and $J_2$ defined on  $ \Phi
_{1,\alpha }\left( \Omega \right)$ by $J_1(\phi ) = \lambda _{p,\phi }  $ and $J_2(\phi )= |\phi (\Omega)|$ for all $\phi \in \Phi
_{1,\alpha }\left( \Omega \right)$. If $\phi$ is a critical point for $J_{1}$ under the volume
constraint $|\phi (\Omega )|=c$ for a fixed constant $c>0$, then   ${\rm Ker }d J_2(\phi ) \subset {\rm Ker }d J_1(\phi )  $, hence   $d J_1(\phi )=\kappa d J_2(\phi )$ for some  $\kappa \in \mathbb{R}$ (the Lagrange multiplier).   This means that
\begin{equation*}
(1-p)\int_{\partial \phi (\Omega )}\left|\frac{\partial u_{p,\phi} }{\partial \nu_F}\right|^p  \psi \circ \left( \tilde{\phi }\right) ^{-1} \cdot \nu d\sigma =\kappa \int_{\partial \phi (\Omega )}\psi \circ \left( \tilde{\phi }\right) ^{-1}
\cdot \nu \text{ }d\sigma   ,
\end{equation*}
for all $\psi \in C^{1,\alpha
}\left( \overline{\Omega },\mathbb{R}^{N}\right)$.

Thus, $\left|\frac{\partial u_{p,\phi} }{\partial \nu_F}\right|^p =\kappa /(1-p)$  on $\partial \phi (\Omega)$, and this implies that $\frac{\partial u_{p,\phi }}{\partial \nu_F}$
is also constant because the function $u_{p,\phi}$ doesn't change sign.

In conclusion,  $\phi$ is a critical point for $J_{1}$ under the  volume constraint above if and only if
 $\frac{\partial u_{p,\phi }}{\partial \nu_F}$ is constant on $\partial \phi (\Omega)$. In other words,
 $ u_{p,\phi }$ solves the following overdetermined problem
\begin{equation} \label{overdet}
\left\{
\begin{array}{ll}
-\text{div}\left( F^{p-1}\left( Du_{p,\phi}\right) F_{\xi }\left(
Du_{p,\phi }\right) \right) =\lambda _{p,\phi }\left\vert u_{p,
\phi}\right\vert ^{p-2}u_{p,\phi},& \text{ in } \phi (\Omega),\vspace{1mm} \\
u_{p,\phi}=0,& \text{ on } \partial\phi\left( \Omega \right),\vspace{1mm} \\
\frac{\partial }{\partial \nu_F }u_{p,{\phi}}= \text{constant},&  \text{ on } \partial\phi\left( \Omega \right).
\end{array}
\right.
\end{equation}
System \eqref{overdet} is  satisfied  when   $\phi\left( \Omega \right)$ is homotetic to a
Wulff shape. This can be deduced either by the symmetry result  in  \cite[Theorem 2.4]{dgmana} or by  the Faber-Krahn inequality
which assures that if $\mathcal{W}$ is the Wulff shape centered in the origin such that $| \mathcal{W}|=|
\phi\left( \Omega \right) | ,$ then $\lambda _{p}(\mathcal{W})\leq
\lambda _{p}({\phi}\left( \Omega \right))$, see e.g. \cite{bfk}.
On the other hand, it is proved in \cite{wxpac2011} that if  system  \eqref{overdet} is  satisfied  then $\Omega$ is homotetic to a Wulff shape. We note that the same conclusion holds for the overdetermined system
\begin{equation} \label{overdetbis}
\left\{
\begin{array}{ll}
-\text{div}\left( F^{p-1}\left( Du  \right) F_{\xi }\left(
Du  \right) \right) =1,& \text{ in } \phi (\Omega),\vspace{1mm} \\
u=0,& \text{ on } \partial\phi\left( \Omega \right),\vspace{1mm} \\
\frac{\partial }{\partial \nu_F }u= \text{constant},&  \text{ on } \partial\phi\left( \Omega \right),
\end{array}
\right.
\end{equation}
as it is has been proved in \cite{CS}.

\subsection{Rellich-Pohozaev identity}

Given  a bounded domain $\Omega $ be  in $\mathbb{R}^{N}$ of class $C^{2,\alpha },$ with $\alpha
\in (0,1]$, we consider a family of dilations  $(1+t)\Omega$  of $\Omega$ which can viewed as a family of  diffeomorphisms  $\phi_t=I+tI$, $t\in {\mathbb{R}}$.

By  differentiating with respect to $t$ and applying formula (\ref{Hadamard}) with $\phi = \psi=I$,  we obtain
\begin{equation}\label{rellich1}
\left. \frac{d}{dt}\lambda _{p,\phi _{t}}\right\vert
_{t=0}=(1-p)\int_{\partial \Omega }\left|\frac{\partial u_{p,\phi }}{\partial \nu_F}\right|^p x \cdot \nu d\sigma .
\end{equation}

 By Lemma~\ref{monoto} we get that
  $\lambda _{p,\phi _{t}}=\lambda _{p}\left( \left( 1+t\right)
\Omega \right) =\left( 1+t\right) ^{-p}\lambda _{p}\left( \Omega \right) $. If we derive this last expression with respect to $t$, we obtain
\begin{equation}\label{rellich2}
\left. \frac{d}{dt}\lambda _{p,\phi _{t} }\right\vert
_{t=0}=\left. \frac{d}{dt}\left( (1+t)^{-p}\lambda _{p}(\Omega)\right)\right\vert _{t=0}=-p\lambda _{p}(\Omega).
\end{equation}

Combining \eqref{rellich1} and \eqref{rellich2}, we  obtain the following Rellich-Pohozaev identity
\begin{equation}\label{rellichformula}
\lambda _{p}(\Omega )=\frac{p-1}{p}\int_{\partial \Omega } \left|\frac{\partial u_{p,\phi }}{\partial \nu_F}\right|^p    x \cdot \nu d\sigma .
\end{equation}

Formula \eqref{rellichformula} holds not only for the first eigenvalue but for any eigenvalue. In fact, the following theorem holds.

\begin{thm}
 Let $p>1$ and $\Omega $ be a bounded domain in $\mathbb{R}^{N}$ of class $C^{2,\alpha },$ with $\alpha \in (0,1]$.
 Let $\lambda$ be an eigenvalue of equation \eqref{intro_eigpb} and $u$ be a corresponding eigenfunction normalized by
 $\| u\|_{L^p(\Omega)}=1$. Then
 \begin{equation*}\label{rellichformulabis}
\lambda =\frac{p-1}{p}\int_{\partial \Omega } \left|\frac{\partial u}{\partial \nu_F}\right|^p    x \cdot \nu d\sigma .
\end{equation*}
\end{thm}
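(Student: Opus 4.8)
The plan is to observe that the integral identity established within the proof of Theorem~\ref{Th_Hadamard} never used that $u$ is the \emph{first} eigenfunction: the only facts invoked there were that $u$ solves the eigenvalue equation \eqref{eigpb} with eigenvalue $\lambda$ and that $u$ vanishes on the boundary. I would therefore reproduce that computation for an arbitrary eigenpair $(\lambda,u)$, taking $\phi=I$ so that $\phi(\Omega)=\Omega$, and then specialize the auxiliary vector field to $\chi(x)=x$ and simplify by Euler's homogeneous function theorem.

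Concretely, I would run the approximation scheme with $V^{(\varepsilon)}=(F^2+\varepsilon)^{p/2}/p$ exactly as in Theorem~\ref{Th_Hadamard}, but with the right-hand side $\lambda|u|^{p-2}u$ of the equation solved by the given eigenfunction, and then let $\varepsilon\to 0$. Since $\tfrac{\partial}{\partial y_j}(|u|^p)=p|u|^{p-2}u\,\tfrac{\partial u}{\partial y_j}$, the two interior terms carrying the factor $\lambda$ cancel, and after the boundary reduction of \eqref{B} one obtains, for every $\chi\in C^1(\overline\Omega,\mathbb{R}^N)$, the Pohozaev-type identity
\begin{align*}
\int_\Omega \big(F^p(Du)-\lambda|u|^p\big)\,\text{div}\chi\,dx
&-p\int_\Omega F^{p-1}(Du)F_\xi(Du)\cdot(D\chi)^T\cdot(Du)^T\,dx\\
&=(1-p)\int_{\partial\Omega}F^p(Du)\,\chi\cdot\nu\,d\sigma,
\end{align*}
where on $\partial\Omega$ I use $Du=\tfrac{\partial u}{\partial\nu}\nu$ and the homogeneity of $F$ to collapse the two boundary integrals into one, just as in the passage from \eqref{penultima} to \eqref{Had_1}. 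The regularity needed here, namely $u\in C^{1,\beta}(\overline\Omega)$ and $u_\varepsilon\to u$ in $C^1(\overline\Omega)$, follows from the $L^\infty$ bound for eigenfunctions together with the boundary estimates of \cite{Lib,Ladyz}, exactly as in the cited proof; in particular neither simplicity nor a fixed sign of $u$ is used.

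Specializing to $\chi(x)=x$, for which $\text{div}\chi=N$ and $D\chi$ is the identity matrix, the second integrand becomes $F^{p-1}(Du)\,\langle F_\xi(Du),Du\rangle=F^p(Du)$ by Euler's theorem, so the left-hand side reduces to
\begin{equation*}
(N-p)\int_\Omega F^p(Du)\,dx-N\lambda\int_\Omega|u|^p\,dx.
\end{equation*}
Testing \eqref{eigpb} with $\varphi=u$ and applying Euler again gives $\int_\Omega F^p(Du)\,dx=\lambda\int_\Omega|u|^p\,dx=\lambda$ by the normalization $\|u\|_{L^p(\Omega)}=1$. Hence the left-hand side equals $(N-p)\lambda-N\lambda=-p\lambda$, while on the boundary $F^p(Du)=|\partial u/\partial\nu_F|^p$ as at the end of the proof of Theorem~\ref{Th_Hadamard}. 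Equating the two sides yields
\begin{equation*}
-p\lambda=(1-p)\int_{\partial\Omega}\Big|\frac{\partial u}{\partial\nu_F}\Big|^p\,x\cdot\nu\,d\sigma,
\end{equation*}
and dividing by $-p$ gives the claimed formula.

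The only genuine obstacle is the boundary regularity underpinning the integrations by parts: because $u$ need not be of class $C^2$, the identity must be obtained through the regularized problems \eqref{prob_approx} and a careful passage to the limit $\varepsilon\to 0$. This is precisely the technical step already carried out for the first eigenfunction, and I expect it to transfer verbatim, since the approximation depends only on the equation solved by $u$ and on the homogeneous Dirichlet condition, and not on any extremal or variational characterization of $\lambda$.
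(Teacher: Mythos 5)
Your proposal is correct and is essentially the paper's own argument: the paper likewise regularizes with $V^{(\varepsilon)}=(F^2+\varepsilon)^{p/2}/p$, multiplies the approximate equation by $x_k\,\partial u_\varepsilon/\partial x_k$, integrates by parts and lets $\varepsilon\to 0$, which is exactly your Pohozaev identity specialized to $\chi(x)=x$ from the outset. Your observation that only the equation and the homogeneous Dirichlet condition are used (neither simplicity nor a fixed sign of $u$) matches the paper's stated reason for running the Rellich-type computation directly rather than invoking the Hadamard formula, which is available only for the first eigenvalue.
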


Since the Hadamard formula for the Finsler Laplacian is currently proved only for the first eigenvalue, the proof of the previous theorem in the case of an arbitrary eigenvalue cannot be performed as above. However,  one can adapt the same argument used in \cite{lambre}  for the $p$-Laplacian and based on   the original approach of F. Rellich~\cite{rellich}.
\begin{proof}
As done in the proof of Theorem \ref{Th_Hadamard} one may follow the approximation argument of
\cite[Lemma~3.2]{ciraolo} or  \cite[Theorem~2.6]{xiathesis} and consider the approximating family of boundary value problems \eqref{prob_approx} defined in $\Omega$.  Reasoning as \cite{ciraolo}, classical regularity results (see e.g. \cite{Ladyz}) assure that there
exists a solution $u_{\varepsilon }$ to problem (\ref{prob_approx}) such
that $u_{\varepsilon }\in  W^{2,2}(\Omega )\cap C^{1,\gamma } (  \overline{\Omega})  $ for some $\gamma \in (0,1) $, and $\left\vert u_{\varepsilon }\right\vert \leq M_{0},$
with $M_{0}$ a constant independent of $\varepsilon .$ Moreover, the $C^{1,\gamma}$ estimates in \cite{Lib} imply that
$u_{\varepsilon }\rightarrow u$ in $
C^{1}\left(\overline\Omega \right) $ up to
subsequences.
Reasoning as in \cite{lambre}, one can multiply both sides of the first equality in \eqref{prob_approx} by
$\sum_{k=1}^{N}x_k \frac{\partial u_{\varepsilon}}{\partial x_k}$ and integrating, it follows that
\begin{equation}
-\int_{\Omega} x_k \frac{\partial}{\partial x_h}\left(V^{(\varepsilon ) }_{\xi_{h }   } (Du_{\varepsilon  })\frac{\partial u_{\varepsilon}}{\partial x_k}\right)+
\int_{\Omega} x_k \frac{\partial}{\partial x_k}(V^{(\varepsilon ) } (Du_{\varepsilon  }))=
\lambda \int_{\Omega} |u|^{p-2}u  x_k   \frac{\partial u_{\epsilon }}{\partial x_k} dx.
\end{equation}
Here and in the sequel the summation symbol is omitted. By applying the Divergence Theorem to both sides of the previous equality and letting $\varepsilon\rightarrow 0$, we get
\begin{equation}
\begin{split}
  -\int_{\partial \Omega} & x_k F^{p-1}(Du)F_{\xi_ {h }}(Du)  \frac{\partial u}{\partial x_k}   \nu_h d\sigma+\int_\Omega F^{p-1}(Du)F_{\xi_{ k}}(Du) \frac{\partial u}{\partial x_k}
   \\
  &+\frac{1}{p}\int_{\partial \Omega} x_k F^p(Du) \nu_k d\sigma -\frac{N}{p} \int_\Omega F^p(Du) dx= -\frac{N\lambda}{p},
  \end{split}
\end{equation}
where the last equality follows since $\| u\|_{L^p(\Omega)}=1$. Finally, the statement follows using the properties of $F$ and recalling that $F^p(Du)=\left|\frac{\partial u }{\partial \nu_F}\right|^p$. \\
\end{proof}

{\bf Acknowledgments:} The authors are very thankful to Prof. Enrique Zuazua for bringing to their attention the method which allows to deduce Rellich-type identities from  Hadamard-type formulas.  The authors are also very  thankful to an anonymous referee for the careful reading of the paper and for pointing out the approximating procedure in \cite[Theorem~2.6]{xiathesis}  which led them to simplify the presentation and adjust  the procedure in the proof of formula \eqref{Hadamard}.
The authors are members of the Gruppo Nazionale per l'Analisi Matematica, la Probabilit\`a e le loro Applicazioni (GNAMPA) of the Istituto Nazionale di Alta Matematica (INdAM). Research partially supported by project Vain-Hopes within the program VALERE: VAnviteLli pEr la RicErca.

\end{document}